\theoremstyle{plain} 
\newtheorem{theorem}{\indent\bf Theorem}[section]
\newtheorem{lemma}[theorem]{\indent\bf Lemma}
\newtheorem{proposition}[theorem]{\indent\bf Proposition}
\theoremstyle{definition} 
\newtheorem{definition}[theorem]{\indent\bf Definition}
\newcommand{\be}{\begin{equation}}
\newcommand{\ee}{\end{equation}}
\newcommand{\bea}{\begin{eqnarray}}
\newcommand{\eea}{\end{eqnarray}}
\newcommand{\ben}{\begin{eqnarray*}}
	\newcommand{\een}{\end{eqnarray*}}
\newcommand{\bt}{\begin{split}}
	\newcommand{\et}{\end{split}}
\newcommand{\bet}{\begin{equation}}
\begin{document}
\title{On a Bogomolov type vanishing theorem}

\author[Z. Li]{Zhi Li}
\address{Zhi Li: School of Science, Beijing University of Posts and Telecommunications, Beijing 100876, China.}
\email{lizhi@amss.ac.cn, lizhi10@foxmail.com}

\author[X. Meng]{Xiankui Meng\textsuperscript{*}}
\address{Xiankui Meng: School of Science, Beijing University of Posts and Telecommunications,
	Beijing 100876, China.}
\email{mengxiankui@amss.ac.cn}

\author[J. Ning]{Jiafu Ning\textsuperscript{*}}
\address{Jiafu Ning: \ Department of Mathematics, Central South University, Changsha, Hunan 410083, China.}
\email{jfning@csu.edu.cn}
\author[Z. Wang]{Zhiwei Wang}
\address{Zhiwei Wang: Laboratory of Mathematics and Complex Systems (Ministry of Education)\\ School of Mathematical Sciences\\ Beijing Normal University\\ Beijing 100875, China.}
\email{zhiwei@bnu.edu.cn}
\author[X. Zhou]{Xiangyu Zhou}
\address{Xiangyu Zhou: Institute of Mathematics\\Academy of Mathematics and Systems Sciences\\and Hua Loo-Keng Key
	Laboratory of Mathematics\\Chinese Academy of
	Sciences\\Beijing\\100190, China.}
\address{School of
	Mathematical Sciences, University of Chinese Academy of Sciences,
	Beijing 100049, China}
\email{xyzhou@math.ac.cn}
\begin{abstract}
	Let $X$ be a compact K\"ahler manifold, and let $L \rightarrow X$ be a holomorphic line bundle equipped with a singular metric $h$ such that the curvature $\mathrm{i}\Theta_{L,h}\geqslant 0$ in the sense of currents.  The main result of the present paper is that $H^n(X,\mathcal{O}(\Omega^p_X\otimes L)\otimes \mathcal{I}(h))=0$ for $p\geqslant n-\operatorname{nd}(L,h)+1$.  This is a generalization of Bogomolov's vanishing theorem.
\end{abstract}

\thanks{ This research is supported by National Key R\&D Program of China (No. 2021YFA1002600).  The authors are partially supported respectively by NSFC grants (12271057, 12201060, 12071485, 12071035, 11688101).
Z.W. Wang  is partially supported by Beijing Natural Science Foundation (1202012, Z190003). The second author and the third author are both corresponding authors.}
%
{\maketitle}

\section{Introduction}
Let $X$ be a complex projective manifold of dimension $n$, and let $L\rightarrow X$ be a holomorphic line bundle over $X$. The famous Bogomolov  vanishing theorem \cite{Bog78} says that
$$H^0\left(X,\mathcal{O}\left(\Omega^p_X\otimes L^{-1}\right)\right)=0,$$
for $p<\kappa(L)$, where $\kappa(L)$ is the Kodaira-Iitaka dimension of $L$. By the Serre duality theorem, it is equivalent to that
$$H^n\left(X,\mathcal{O}\left(\Omega^p_X\otimes L\right)\right)=0,$$
for $p\geqslant n-\kappa(L)+1$.

There are many generalizations of the Bogomolov vanishing theorem.
In \cite{Mou98},  Mourougane proved the following two vanishing theorems:
\begin{itemize}
\item if $L$ is a nef line bundle over a compact K\"ahler manifold $X$, then
$$H^0\left(X,\mathcal{O}\left(\Omega^p_X\otimes L^{-1}\right)\right)=0,$$
for $p<\operatorname{nd}(L)$, where $\operatorname{nd}(L)=\max\{k: ({c_{1}(L)})^k\neq 0\}$.
\item If $L$ is a pseudoeffective over a compact K\"ahler manifold $X$, then
$$H^0\left(X,\mathcal{O}\left(\Omega^p_X\otimes L^{-1}\right)\right)=0,$$
for $p<e(L)$, where $e(L)$ is defined to be the largest natural number $k$ such that there exists a positive $(1,1)$-current $T\in c_1(L)$ whose absolutely continuous part in the Lebesgue decomposition has rank $k$ on a strictly positive Lebesgue measure set on $X$.
\end{itemize}
In \cite{Bou02}, Bouksom proved that if $L$ is a pseudoeffective line bundle over a compact K\"ahler manifold $(X,\omega)$, then
$$H^0\left(X,\mathcal{O}\left(\Omega^p_X\otimes L^{-1}\right)\right)=0,$$
if $p<\operatorname{nd}(L)$, where $\operatorname{nd}(L)$ here is defined (cf.  \cite{Bou02, Dem12}) to be the largest integer $k$, such that $\exists c>0, \forall \varepsilon>0, \exists $ Hermitian metrics $h_\varepsilon$ of $L$ with analytic singularities,
such that $\mathrm{i}\Theta_{L,h_\varepsilon}\geqslant -\varepsilon\omega$, and $\int_{X\setminus Z_\varepsilon}(\mathrm{i}\Theta_{L,h_\varepsilon}+\varepsilon\omega)^k\wedge \omega^{n-k}\geqslant c$, where  $Z_\varepsilon$ is the singular set of $h_\varepsilon$. It is proved in \cite{Bou02} that if $L$ is nef, the two numerical dimensions mentioned above  coincide. For a new proof, see also \cite{Wu20}.

Multiplier ideal sheaves play an important role in the study of the several complex variables and complex geometry. Inspired by Nadel's vanishing theorem, it is natural to ask if one can get Bogomolov's vanishing theorem with multiplier ideal sheaves.

Recall that a singular (Hermitian) metric on a line bundle is simply a Hermitian metric which is given in any trivialization by a weight function $e^{-\varphi}$ such that $\varphi$ is locally integrable.  Recently, Watanabe \cite{Wat22} proved that  if $L$ is a holomorphic line bundle over a projective manifold $X$ of dimension $n$ equipped with a K\"ahler metric $\omega$, and if $h$ is a singular Hermitian metric of $L$ such that $\mathrm{i}\Theta_{L,h}\geqslant \varepsilon\omega$ in the sense of currents for some $\varepsilon>0$, then
$$H^n\left(X,\mathcal O\left(\Omega^p_X\otimes L\right)\otimes \mathcal I\left(h\right)\right)=0,$$
for $p>0$, where $\mathcal I(h)$ is the multiplier ideal sheaf associated to the singular metric $h$.

Note that in Watanabe's theorem, the line bundle is assumed to be big, and the manifold is assumed to be projective.

In this paper, we  prove the following Bogomolov type vanishing theorem for compact K\"ahler manifolds and pseudoeffective line bundles.
\begin{theorem}
\label{thm:main}Let $X$ be a compact K\"ahler manifold of dimension $n$. Let $L \rightarrow X$ be a holomorphic line
bundle equipped with a singular metric $h$ with
\[ \mathrm{i}\Theta_{L, h}\geqslant 0, \]
in the sense of currents, and let $\operatorname{nd} (L, h)$ denote the numerical
dimension of $(L, h)$. Then
\[ H^n\left(X,\mathcal O\left(\Omega^p_X\otimes L\right)\otimes \mathcal I(h)\right)=0, \]
for $p \geqslant n - \operatorname{nd}(L, h) + 1$.
\end{theorem}
For the definition of  $\operatorname{nd}(L,h)$, see Definition \ref{def}, which is originally defined in \cite{cao2014}. In the same paper, an example is provided to show that in general, one does not have  $\operatorname{nd}(L,h)=\operatorname{nd}(L)$, even if $h$ has minimal singularities, cf. \cite[Example 1.7]{cao2014}.

As an application, one can find a singular metric $h$ on $L$ through the Kodaira-Iitaka map, such that $\kappa(L)\leqslant \operatorname{nd} (L,h)$,  thus our  Theorem \ref{thm:main} covers the Bogomolov vanishing theorem (see Theorem \ref{thm:bogomolov}). Meanwhile, if $\mathrm{i}\Theta_{L,h}\geqslant \varepsilon\omega$ for some $\varepsilon>0$, then $\operatorname{nd}(L,h)=n$, thus our Theorem \ref{thm:main} also implies Watanabe's result.

The structure of the present paper is as follows. In \S \ref{sect: nd}, we introduce the numerical dimension $\operatorname{nd}(L,h)$. In \S \ref{sect: l2}, we prove an $L^2$-estimate for the $\bar\partial$-equation related to the numerical dimension $\operatorname{nd}(L,h)$. In \S \ref{sect: cech}, we prepare a criterion of the vanishing of cohomology class in the \v Cech cohomology group. In \S \ref{sect:proof}, we complete the proof of the Theorem \ref{thm:main}.

\section{Numerical dimension}\label{sect: nd}
Let $\theta+dd^c\varphi$ be  a positive closed current on a compact K\"ahler manifold $(X,\omega)$, where $\theta$ is a smooth form and $\varphi$ is a quasi-plurisubharmonic function on $X$. From \cite[Theorem 2.2.1]{DPS01}, there is a quasi-equisingular approximation $\{\varphi_k\}$ of $\varphi$ for the current $\theta+dd^c\varphi$, such that
\begin{itemize}
\item the sequence $\{\varphi_k\}$ converges to $\varphi$ in $L^1$ topology and $\theta+dd^c\varphi_k\geqslant -\tau_k\omega$ for some constants $\tau_k\rightarrow 0$ as $k\rightarrow \infty$;
\item all the $\varphi_k$ has analytic singularities and $\varphi_k$ is less singular than $\varphi_{k+1}$, i.e. $\varphi_{k+1}\leqslant \varphi_{k}+O(1)$;
\item for any $\delta>0$ and $m\in \mathbb N$, there exists $k_0(\delta,m)\in \mathbb N$ such that
$\mathcal I(m(1+\delta)\varphi_k)\subset \mathcal I(m\varphi)$ for every $k\geqslant k_0(\delta,m)$.
\end{itemize}
Let $T_1=\theta_1+dd^c\varphi_1,\cdots, T_k=\theta_k+dd^c\varphi_k$ be closed positive $(1,1)$-currents on a compact K\"ahler manifold $X$. In \cite{cao2014}, it is proved that one can define  a cohomological  product
$$\langle T_1\wedge\cdots\wedge T_k\rangle\in H^{k,k}_{\geqslant 0}(X),$$
such that for all $u\in H^{n-k,n-k}(X)$,
$$\langle T_1\wedge\cdots\wedge T_k\rangle\wedge u \coloneqq \lim_{j\rightarrow \infty}\int_X(\theta_1+dd^c\varphi_{1,j})_{ac}\wedge\cdots\wedge(\theta_k+dd^c\varphi_{k,j})_{ac}\wedge u,$$
where $\{\varphi_{i,j} \}_{j=1}^{\infty}$ is a quasi-equisingular approximation of $\varphi_i$ and $(\theta_i+dd^c\varphi_{i,j})_{ac}$ is the absolute continuous part of the Lebesgue decomposition of the  current $\theta_i+dd^c\varphi_{i,j}$.
The key point of the product is that the above limit exists and does not depend on the choice of the quasi-equisingular approximation.
\begin{definition}[\cite{cao2014}]\label{def}
Let $L$ be a holomorphic line bundle on a compact K\"ahler manifold $X$ and $h$ a singular metric on $L$. The numerical dimension $\operatorname{nd}(L,h)$ is defined to be the largest $v\in \mathbb N$ such that  $\langle (\mathrm{i}\Theta_{L,h})^v\rangle\neq 0$.
\end{definition}

Another important concept is the Kodaira-Iitaka dimension. Let $L$ be a holomorphic line bundle over a compact complex manifold. For each positive integer $m\geqslant 1$, let $s_0, \cdots, s_N$ be a basis of $H^0(X,mL)$. Consider the Kodaira-Iitaka map
\[
\Phi_{m}: X \setminus B_{m} \longrightarrow \mathbb{P}^N = P \left(H^0 (X, m L)^{\ast}\right), \quad x\mapsto[s_0(x):\cdots:s_N(x)],
\]
defined by $s_0, \cdots, s_N$, where $B_{m} = \bigcap_{i=0}^{N} s_{i}^{-1}(0)$ is the base locus of the linear system $| mL |$. There is a canonical commutative diagram
\begin{equation*}
\begin{CD}
\left. L \right|_{X \setminus B_{m}} @>>> \mathcal{O}(1) \\
@VVV @VVV\\
X \setminus B_{m} @>\Phi_{m}>> P \left(H^0 (X, m L)^{\ast}\right).
\end{CD}
\end{equation*}
Moreover, the holomorphic map $\Phi_{m}$ can be extended to a meromorphic map $\Phi_{m}: X \dashrightarrow \mathbb{P}^{N}$. Denote by $Y_{m} \subset \mathbb{P}^{N}$ the image of the meromorphic map $\Phi_m$. The Kodaira-Iitaka dimension $\kappa(L)$ of $L$ is defined to be
\[ \sup\left\{\dim Y_m:m\geqslant 1\right\}. \]
In the case $H^0(X,mL)=0$, we set $\kappa(L)=-\infty$. It is clear that $\kappa(L)\leqslant \dim X$.


\begin{lemma}\label{lem:ki}
  Let $X$ be a compact K\"ahler manifold and $L \rightarrow X$ a holomorphic line bundle over $X$. Then there exists a singular metric $h$ on $L$ such that the Kodaira dimension $\kappa (L) \leqslant \operatorname{nd} (L, h)$.
\end{lemma}

\begin{proof}
With the notations as above, we may assume that $\kappa ( L ) = \dim Y_{m} \geqslant 0$ for some $m \geqslant 1$. Denote by $j : Y_{m} \rightarrow \mathbb{P}^{N}$ the natural inclusion map. Then $\Phi_{m}$ can be written as the composition
\[ X \setminus B_{m} \xrightarrow{\Psi_{m}} Y_{m} \xrightarrow{j} \mathbb{P}^{N}, \]
where $\Psi_{m}$ is induced by $\Phi_{m}$. In this case, we have
\[
\left. L \right|_{X \setminus B_{m}} = \Phi_{m}^{\ast} \mathcal{O} ( 1 ) = \Psi_{m}^{\ast} \left( j^{\ast} \mathcal{O} ( 1 ) \right).
\]
  Denote by $h_{\mathrm{can}}$ the metric on $\mathcal{O} (1)$ defined by
  \[ e^{- \log (| z_0 |^2 + \cdots + | z_N |^2)}, \]
  where $[z_0: \cdots: z_N]$ is the homogeneous coordinate of $\mathbb{P}^N$.
  Its curvature $\mathrm{i}\Theta_{\mathcal{O} (1), h_{\mathrm{can}}}$ is nothing but the Fubini-Study
  metric. It follows that the line bundle $j^{\ast} \mathcal{O} (1) \rightarrow Y_m$ is ample, and on the regular part $(Y_m)_{\mathrm{reg}}$ of $Y_m$, the
  curvature of the pull back metric $j^{\ast} h_{\mathrm{can}}$ on $j^{\ast} \mathcal{O} ( 1 )$ is positive. Since $\Phi_m :
  X\setminus B_{m} \rightarrow Y_m$ has dense image, there exists a point $x_0 \in X\setminus B_{m}$ such that $y_0 = \Phi_m (x_0) \in (Y_m)_{\mathrm{reg}}$ and the rank of $(\Phi_m)_{\ast}$ at $x_0$ is $\dim
  Y_m$.

  Define a metric $h = e^{- \varphi}$ on $L \rightarrow X$ by local weight
  \[ \varphi = \log (| s_0  |^2 + \cdots +  | s_N |^2) . \]
  Then $\varphi$ admits analytic singularities and the curvature $\mathrm{i} \Theta_{L, h} \geqslant 0$ in the sense of current. Since
  $\Phi_m = j \circ\Psi_m$ and $h = \Phi_m^{\ast} h_{\mathrm{can}} =
  \Psi_m^{\ast} j^{\ast} h_{\mathrm{can}}$, on $X\setminus B_{m}$, one
  has
  \[ \mathrm{i}\Theta_{L, h} = \Psi_{m}^{\ast} \left(\mathrm{i}\Theta_{\mathcal{O}
     (1), j^{\ast}h_{\mathrm{can}}}\right). \]
  Moreover, $\mathrm{i}\Theta_{\mathcal{O} (1), j^{\ast}h_{\mathrm{can}}} > 0$ and
  $\Psi_m$ is a submersion near $x_0$. So the number of positive eigenvalues of
  $\mathrm{i}\Theta_{L, h}$ is $\kappa ( L )$ near $x_0$. It then follows that
  \[ (\mathrm{i}\Theta_{L, h})^{\kappa ( L )} \wedge \omega^{n - \kappa ( L )} > 0
  \]
  near $x_0$. Therefore, by the definition of $\operatorname{nd} ( L, h )$, we get that
  \[ \kappa (L) \leqslant \operatorname{nd} (L, h) . \]
\end{proof}
	
\section{$L^2$-estimates}\label{sect: l2}

In this section, we prove an $L^2$-estimate which is inspired by \cite{cao2014}. We need the following

\begin{lemma}
  \label{lem:compare}Let $X$ be a complex manifold of dimension $n$, and let $\omega$, $\gamma$ be Hermitian metrics on $X$ such
  that $\gamma \geqslant \omega$. Let $(L,h)\rightarrow X$ be a holomorphic Hermitian line bundle on $X$. Then for every $L$-valued $(p, n)$ form $u$, we
  have
  \[ | u |_{h, \gamma}^2 d V_{\gamma} \leqslant | u |^2_{h, \omega} d
     V_{\omega} . \]
  If $\mathrm{i}\Theta \in C^{\infty}_{1, 1} (M, \mathrm{Herm} (L, L))$ is positive, we
  have
  \[ \langle [\mathrm{i}\Theta, \Lambda_{\gamma}]^{- 1} u, u \rangle_{h, \gamma} d
     V_{\gamma} \leqslant \langle [\mathrm{i}\Theta, \Lambda_{\omega}]^{- 1} u, u
     \rangle_{h, \omega} d V_{\omega}, \]
     where $\Lambda_{\gamma}$ and $\Lambda_{\omega}$ are the adjoint operators of $L_{\gamma}=\gamma \wedge\cdot$ and $L_{\omega}=\omega \wedge\cdot$ respectively.
\end{lemma}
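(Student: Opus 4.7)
Both inequalities are pointwise in $x \in X$, so fix $x_{0} \in X$ and choose holomorphic coordinates $(z_1, \ldots, z_n)$ centered at $x_0$ which simultaneously diagonalize $\omega$ and $\gamma$; at $x_0$ this gives
\[
\omega = i \sum_{j=1}^n dz_j \wedge d\bar z_j, \qquad \gamma = i \sum_{j=1}^n \gamma_j\, dz_j \wedge d\bar z_j,
\]
with $\gamma_j \geq 1$ for every $j$ as a consequence of $\gamma \geq \omega$.

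For the first assertion, expand an $L$-valued $(p,n)$-form at $x_0$ as $u = \sum_{|J|=p} u_J\, dz^J \wedge d\bar z_1 \wedge \cdots \wedge d\bar z_n$. A direct computation in this basis shows
\[
|u|^2_{h,\omega}\, dV_\omega = \sum_{|J|=p} |u_J|_h^2\, \beta_0, \qquad |u|^2_{h,\gamma}\, dV_\gamma = \sum_{|J|=p} |u_J|_h^2 \Big(\prod_{j \in J} \gamma_j^{-1}\Big) \beta_0,
\]
where $\beta_0 = i^n dz_1 \wedge d\bar z_1 \wedge \cdots \wedge dz_n \wedge d\bar z_n$. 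Since $\gamma_j \geq 1$, the factor $\prod_{j \in J}\gamma_j^{-1} \leq 1$ and the inequality follows term by term.

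For the second assertion, the key difficulty is that $\omega$, $\gamma$ and $i\Theta$ cannot in general be simultaneously diagonalized. The plan is to first normalize $\gamma$ by the substitution $w_j = \sqrt{\gamma_j}\, z_j$: in $w$-coordinates, $\gamma = i\sum dw_j \wedge d\bar w_j$ is standard, while $\omega = i\sum \gamma_j^{-1} dw_j \wedge d\bar w_j$ has eigenvalues $\gamma_j^{-1} \leq 1$. A further unitary change of frame $w \mapsto w'$ then puts $i\Theta$ in diagonal form with respect to $\gamma$, yielding $\gamma = i\sum dw'_j \wedge d\bar w'_j$, $i\Theta = i\sum_j \mu_j\, dw'_j \wedge d\bar w'_j$ with $\mu_j > 0$, and $\omega$ an arbitrary Hermitian $(1,1)$-form with $\omega \leq \gamma$. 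In this adapted frame, $[i\Theta, \Lambda_\gamma]$ acts diagonally on the basis of $(p,n)$-forms, with eigenvalue $\sum_{j \in J}\mu_j$ on the block indexed by $|J|=p$, and the right-hand side of the inequality becomes an explicit weighted sum.

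The main obstacle is the analysis of the left-hand side, where $\omega$ is not diagonal in the $w'$-basis and $i\Theta$ is not adapted to $\omega$. Using the canonical decomposition $\Lambda^{p,n} T^*_{x_0}X \cong \Lambda^p V^* \otimes \Lambda^n \overline{V^*}$ with $V = T^{(1,0)}_{x_0}X$, the commutator $[i\Theta, \Lambda_\omega]$ acts only on the first tensor factor, as the $p$-th derivation $\Lambda^p A^\omega$ of the Hermitian endomorphism $A^\omega$ of $V^*$ representing $i\Theta$ through $\omega$; likewise for $\gamma$. The desired inequality reduces to a quadratic-form comparison on $\Lambda^p V^*$ between $(\Lambda^p A^\omega)^{-1}$ and $(\Lambda^p A^\gamma)^{-1}$, weighted by the volume ratio $dV_\gamma/dV_\omega = \prod_j \gamma_j$. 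I plan to establish this matrix inequality via the Courant--Fischer min--max principle: $\omega \leq \gamma$ on $V$ yields termwise inequalities between the ordered eigenvalues of $A^\omega$ and $A^\gamma$, and these lift through the derivation construction and combine with the volume factor to give the required bound.
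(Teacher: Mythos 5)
Your treatment of the first inequality is the same as the paper's (simultaneous diagonalization of $\omega$ and $\gamma$, comparison of coefficients with $\gamma_K\geq 1$) and is fine.

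For the second inequality, however, the final step of your outline has a genuine gap. You propose to deduce the comparison of the two quadratic forms $\langle[i\Theta,\Lambda_\gamma]^{-1}u,u\rangle_{h,\gamma}\,dV_\gamma$ and $\langle[i\Theta,\Lambda_\omega]^{-1}u,u\rangle_{h,\omega}\,dV_\omega$ from termwise inequalities between the ordered eigenvalues of $A^\omega$ and $A^\gamma$ obtained by Courant--Fischer. Eigenvalue domination between two positive Hermitian endomorphisms does not imply an inequality of quadratic forms evaluated at the \emph{same} vector $u$ unless the operators are simultaneously diagonalizable, and $A^\omega$ and $A^\gamma$ (hence their derivation extensions $\Lambda^pA^\omega$ and $\Lambda^pA^\gamma$) are not simultaneously diagonalizable in general; Courant--Fischer controls Rayleigh quotients over optimized subspaces, not the value at a fixed $u$ which is an eigenvector of neither operator. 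In addition, the two sides of the desired inequality are measured in different induced inner products on $(p,n)$-forms: passing from $|\cdot|_\omega$ to $|\cdot|_\gamma$ introduces the weights $(\gamma_1\cdots\gamma_n\gamma_K)^{-1}$ on the components $u_K$, and your reduction to ``a comparison of $(\Lambda^pA^\omega)^{-1}$ and $(\Lambda^pA^\gamma)^{-1}$ weighted by the volume ratio'' does not account for this operator (call it $S_\gamma$) sitting between the two metrics. The paper's proof avoids diagonalizing $i\Theta$ altogether: writing $\langle u,v\rangle_{h,\gamma}=\langle u,S_\gamma v\rangle_{h,\omega}$ with $(S_\gamma v)_K=(\gamma_1\cdots\gamma_n\gamma_K)^{-1}v_K$, it applies the Cauchy--Schwarz inequality for the positive operator $[i\Theta,\Lambda_\omega]$ and then verifies by a direct computation --- using only the blockwise positivity of the curvature matrix $(c_{j\bar k})$ and $\gamma_I\geq 1$, no eigenvalues of $i\Theta$ needed --- that
\[
\langle[i\Theta,\Lambda_\gamma]v,v\rangle_{h,\gamma}\;\geqslant\;\gamma_1\cdots\gamma_n\,\langle[i\Theta,\Lambda_\omega]S_\gamma v,S_\gamma v\rangle_{h,\omega},
\]
after which the choice $v=[i\Theta,\Lambda_\gamma]^{-1}u$ gives the claim. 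To repair your argument you would need a fixed-vector operator inequality of this type (or an equivalent duality argument), not an ordered-eigenvalue comparison.
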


\begin{proof}
The proof is quite standard. For the reader's convenient, we provide the details here.
  Let $x_0 \in M$ be an arbitrary point and choose a coordinate chart $(U, z)
  $ centered at $x_0$ such that
  \[ \omega = \mathrm{i} \sum_{1 \leqslant j \leqslant n} d z_j \wedge d \bar{z}_j, \quad
     \gamma = \mathrm{i} \sum_{1 \leqslant j \leqslant n} \gamma_j d z_j \wedge d
     \bar{z}_j \]
  at $x_0$, where $\gamma_j \geqslant 1$, $j = 1, 2, \cdots, n$ are the
  eigenvalues of $\gamma$ with respect to $\omega$. For a multi-index $K$, let
  $\gamma_K = \prod_{j \in K} \gamma_j$. For any $L$-valued $(p, n)$ form $u =
  \sum_K u_K d z_K \wedge d \bar{z} \otimes e$ with $e$ the local frame of
  $L$ and $d \bar{z} = d \bar{z}_1 \wedge \cdots \wedge d \bar{z}_n$, $| K | =
  p$, arranged in increasing order,  one has
  \[ | u |_{h, \gamma}^2 d V_{\gamma} = \sum_K \frac{1}{\gamma_K} | u_K |^2  |
     e |^2_h d V_{\omega} \leqslant \sum_K | u_K |^2  | e |^2_h d V_{\omega} = | u
     |^2_{h, \omega} d V_{\omega} . \]
  We extend the definition of $u_{jI}$ to non-increasing multi-indices $jI$ by deciding that $u_{jI}=0$ if $jI$ contains identical components repeated and $u_{jI}$ is alternate in $jI$. 
   Let us write
    \[ \mathrm{i}\Theta = \mathrm{i} \sum_{j k} c_{j \bar{k}} dz_{j} \wedge d\bar{z}_{k} \]
    and 
    \[ \widehat{d \bar{z}_j}=d\bar{z}_1\wedge\cdots\wedge d\bar{z}_{j-1}\wedge d\bar{z}_{j+1}\wedge d\bar{z}_n. \]
  Then one can check that
  \[ \Lambda_{\gamma} u = \mathrm{i} \sum_{j, | I | = p - 1} (- 1)^{p + j - 1}
     \frac{1}{\gamma_j} u_{j I} d z_I \wedge (\widehat{d \bar{z}_j}) \otimes
     e, \]
  \[ [\mathrm{i}\Theta, \Lambda_{\gamma}] u = \sum_{| I | = p - 1} \sum_{j, k}
     \frac{1}{\gamma_k} c_{j \bar{k}} u_{k I} d z_{j I} \wedge d \bar{z}
     \otimes e, \]
 and thus
  \begin{eqnarray*}
    \langle [\mathrm{i}\Theta, \Lambda_{\gamma}] u, u \rangle_{h, \gamma} & = &
    \frac{1}{\gamma_1 \gamma_2 \cdots \gamma_n} \sum_{| I | = p - 1}
    \frac{1}{\gamma_I} \sum_{j, k} \frac{1}{\gamma_j \gamma_k} c_{j \bar{k}}
    u_{k I} \bar{ u}_{j I}\\
    & \geqslant & \gamma_1 \gamma_2 \cdots \gamma_n \langle [\mathrm{i}\Theta,
    \Lambda_{\omega}] S_{\gamma} u, S_{\gamma} u \rangle_{h, \omega},
  \end{eqnarray*}
  where $S_{\gamma} u = \sum \frac{1}{\gamma_1 \gamma_2 \cdots \gamma_n
  \gamma_K} u_K d z_K \wedge d \bar{z} \otimes e$.
Therefore,
  \begin{eqnarray*}
    | \langle u, v \rangle_{h, \gamma} |^2 & = & | \langle u, S_{\gamma} v
    \rangle_{h, \omega} |^2\\
    & \leqslant & \langle [\mathrm{i}\Theta, \Lambda_{\omega}]^{- 1} u, u \rangle_{h,
    \omega} \langle [\mathrm{i}\Theta, \Lambda_{\omega}] S_{\gamma} v, S_{\gamma} v
    \rangle_{h, \omega}\\
    & \leqslant & \frac{1}{\gamma_1 \gamma_2 \cdots \gamma_n} \langle [i
    \Theta, \Lambda_{\omega}]^{- 1} u, u \rangle_{h, \omega} \langle [i
    \Theta, \Lambda_{\gamma}] v, v \rangle_{h, \gamma}.
  \end{eqnarray*}
Let  $v = [\mathrm{i}\Theta, \Lambda_{\gamma}]^{- 1} u$, it follows that
  \[ \langle [\mathrm{i}\Theta, \Lambda_{\gamma}]^{- 1} u, u \rangle_{h, \gamma} d
     V_{\gamma} \leqslant \langle [\mathrm{i}\Theta, \Lambda_{\omega}]^{- 1} u, u
     \rangle_{h, \omega} d V_{\omega} . \]
\end{proof}

\begin{proposition}
  \label{prop:l2-estimate}Let $(X,\omega)$ be a compact K\"ahler manifold of dimension $n$, and let $L \rightarrow X$ be a holomorphic line
  bundle over $X$ equipped with a singular metric $h$, which is smooth outside a subvariety $Z \subset X$. Assume
  further that
  \[ \mathrm{i}\Theta_{L, h} \geqslant - \varepsilon \omega \]
  on $X\setminus Z$. Let $\lambda_1 \leqslant \lambda_2 \leqslant \cdots \leqslant \lambda_n$ be
  the eigenvalues of $\mathrm{i}\Theta_{L, h}$ with respect to $\omega$. Then for any $L$-valued $(p, n)$ form $f$ with
  \[ \int_X | f |^2_{h, \omega} d V_{\omega} < \infty , \]
   there exists $u$ and $v$ such
  that
  \[ f = \bar{\partial} u + v, \]
  and
  \[ \int_X | u |^2_{h, \omega} d V_{\omega} + \frac{1}{2 p \varepsilon}
     \int_X | v |^2_{h, \omega} d V_{\omega} \leqslant \int_X
     \frac{1}{\lambda_1 + \lambda_2 + \cdots + \lambda_p + 2 p \varepsilon} |
     f |^2_{h, \omega} d V_{\omega} . \]
\end{proposition}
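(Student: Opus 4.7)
The plan is to combine a Bochner--Kodaira--Nakano inequality for $L$-valued $(p,n)$-forms with a Hahn--Banach/Riesz duality, carried out on the smooth locus $X\setminus Z$ equipped with a family of complete K\"ahler metrics $\omega_\delta\geq\omega$, and then pass to the limit $\delta\to 0$. Lemma~\ref{lem:compare} provides the crucial mechanism for converting the resulting $\omega_\delta$-estimate into one whose right-hand side depends only on $\omega$.

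I would start with the pointwise curvature analysis on $X\setminus Z$. In a local $\omega$-orthonormal frame in which $i\Theta_{L,h}=i\sum\lambda_j\,dz_j\wedge d\bar z_j$, an $L$-valued $(p,n)$-form $g=\sum_{|K|=p}g_K\,dz_K\wedge d\bar z\otimes e$ satisfies $\langle[i\Theta_{L,h},\Lambda_\omega]g,g\rangle_{h,\omega}=\sum_K\bigl(\sum_{j\in K}\lambda_j\bigr)|g_K|^2$, with minimum eigenvalue $\lambda_1+\cdots+\lambda_p$. Since $[\omega,\Lambda_\omega]=p$ on $(p,n)$-forms, the operator $B:=[i\Theta_{L,h}+2\varepsilon\omega,\Lambda_\omega]=[i\Theta_{L,h},\Lambda_\omega]+2p\varepsilon\,I$ is strictly positive (smallest eigenvalue $\geq p\varepsilon$), and a pointwise Cauchy--Schwarz on each $K$-eigenspace gives $\langle B^{-1}f,f\rangle_{h,\omega}\leq |f|^2_{h,\omega}/(\lambda_1+\cdots+\lambda_p+2p\varepsilon)$.

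Next, fix a family of complete K\"ahler metrics $\omega_\delta\geq\omega$ on $X\setminus Z$ converging smoothly to $\omega$ on compact subsets. Applying Bochner--Kodaira--Nakano on $(X\setminus Z,\omega_\delta)$ to $g\in\mathrm{dom}(\bar\partial^*_{\omega_\delta})$ (using completeness to identify Hilbert and formal adjoints and noting $\bar\partial g=0$ automatically at the top anti-holomorphic degree), then adding $2p\varepsilon\|g\|^2_{h,\omega_\delta}$ to both sides via $[\omega_\delta,\Lambda_{\omega_\delta}]=p$, yields
\[
\|\bar\partial^*_{\omega_\delta}g\|^2_{h,\omega_\delta}+2p\varepsilon\|g\|^2_{h,\omega_\delta}\geq \int_{X\setminus Z}\langle B_\delta g,g\rangle_{h,\omega_\delta}\,dV_{\omega_\delta},\quad B_\delta:=[i\Theta_{L,h}+2\varepsilon\omega_\delta,\Lambda_{\omega_\delta}].
\]
Since $\omega_\delta\geq\omega$, $B_\delta\geq [i\Theta_{L,h}+2\varepsilon\omega,\Lambda_{\omega_\delta}]$ as operators; Lemma~\ref{lem:compare} applied to the positive form $i\Theta_{L,h}+2\varepsilon\omega\geq\varepsilon\omega$ then chains to
\[
\int\langle B_\delta^{-1}f,f\rangle_{h,\omega_\delta}\,dV_{\omega_\delta}\leq \int\langle[i\Theta_{L,h}+2\varepsilon\omega,\Lambda_\omega]^{-1}f,f\rangle_{h,\omega}\,dV_\omega\leq \int_X\frac{|f|^2_{h,\omega}}{\lambda_1+\cdots+\lambda_p+2p\varepsilon}\,dV_\omega.
\]
Pointwise Cauchy--Schwarz combined with the Bochner inequality bounds the linear functional $g\mapsto\langle g,f\rangle_{h,\omega_\delta}$ on $\mathrm{dom}(\bar\partial^*_{\omega_\delta})$, in the norm $(\|\bar\partial^*_{\omega_\delta}g\|^2+2p\varepsilon\|g\|^2)^{1/2}$, by the square root of this right-hand side; the standard Hahn--Banach/Riesz argument then produces $u_\delta$, $w_\delta$ with $f=\bar\partial u_\delta+\sqrt{2p\varepsilon}\,w_\delta$ and $\|u_\delta\|^2_{h,\omega_\delta}+\|w_\delta\|^2_{h,\omega_\delta}$ controlled by the same quantity. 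Setting $v_\delta:=\sqrt{2p\varepsilon}\,w_\delta$ recovers the claimed estimate with $\omega_\delta$ still on the left.

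Finally I would pass to the limit $\delta\to 0$. On any compact $K\subset X\setminus Z$, $\omega_\delta\to\omega$ smoothly, so $\|u_\delta\|_{L^2(K,\omega)}$ and $\|u_\delta\|_{L^2(K,\omega_\delta)}$ are comparable with constants tending to $1$, giving uniform local bounds. Weak compactness and a diagonal extraction produce $u,v\in L^2_{\mathrm{loc}}(X\setminus Z,\omega)$ with $u_\delta\rightharpoonup u$, $v_\delta\rightharpoonup v$ and $f=\bar\partial u+v$ as currents; weak lower semicontinuity of the $L^2_\omega$-norm on each $K$ followed by an exhaustion $K\uparrow X\setminus Z$ delivers the global $\omega$-norm bound, and $u$, $v$ extend across the measure-zero set $Z$. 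The main technical obstacle is precisely this passage to the limit: one must leverage Lemma~\ref{lem:compare} to keep the right-hand side in terms of $\omega$ uniformly in $\delta$, and carefully justify that the $\omega$-norm bound survives the weak limit even though the a priori bounds are only in $\omega_\delta$-norm.
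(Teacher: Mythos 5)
Your proposal is correct and follows essentially the same route as the paper's proof: the complete metrics $\omega_\delta=\omega+\delta\omega_1$ on $X\setminus Z$, the Bochner--Kodaira--Nakano inequality plus Cauchy--Schwarz to bound $|\langle f,s\rangle|^2$ by $\left(\|\bar\partial^*s\|^2+2p\varepsilon\|s\|^2\right)$ times the curvature-weighted integral of $f$, Hahn--Banach/Riesz duality to produce $f=\bar\partial u_\delta+v_\delta$ with the split estimate, Lemma~\ref{lem:compare} to keep the right-hand side in $\omega$-terms uniformly in $\delta$, and weak limits as $\delta\to 0$. The only point where the paper is more explicit is the final step across $Z$: it invokes the extension result \cite[Lemma 11.10]{Dem12} to conclude that $f=\bar\partial u+v$ holds on all of $X$, rather than merely asserting extension across a measure-zero set.
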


\begin{proof}
 By {\cite{demailly1982}}, $X\setminus Z$ admits a
  complete K{\"a}hler metric $\omega_1$.
  Set $\omega_{\delta} = \omega
  + \delta \omega_1$. It is a complete K{\"a}hler metric on $X\setminus Z$ for every $\delta>0$. Set $\langle \! \langle \cdot, \cdot \rangle \! \rangle_{h, \delta}$ the global inner product
  defined by $h$ and $\omega_{\delta}$, and $\| \cdot \|_{h, \delta}$ the
  norm induced by this inner product. The completion of the space of smooth, compact-supported $L$-valued $(p,q)$ forms with respect to the norm  $\| \cdot \|_{h,\omega_{\delta}}$ is  $L^2_{(p,q)}(X\setminus Z,L,h,\omega_{\delta})$, whose elements are $L^2$ integrable $L$-valued $(p,q)$ forms with measurable coefficients. Therefore, the operator $\bar{\partial}$ can be extended to an unbounded operator from $L^2_{(p,q)}(X\setminus Z,L,h,\omega_{\delta})$ to $L^2_{(p,q+1)}(X\setminus Z,L,h,\omega_{\delta})$. Here $u\in\operatorname{Dom}\bar{\partial}$ if $u$ and $\bar{\partial}u$ are square integrable on $X$. Denote by $\bar{\partial}^{\ast}$ the adjoint operator of $\bar{\partial}$ and set $\operatorname{Dom}\bar{\partial}^{\ast}$ the domain of $\bar{\partial}^{\ast}$. There exists an orthogonal decomposition
  \[
  L^2_{(p,q)}(X\setminus Z,L,h,\omega)=\operatorname{Ker}\bar{\partial}\oplus \overline{\operatorname{Im}\bar{\partial}^{\ast}}.
  \]

  Let $s$ be an $L$-valued $(p, n)$ form with coefficients in $C^{\infty}_c (X\setminus Z)$.
  Then the Bochner-Kodaira-Nakano equality reads
  \[ \| \bar{\partial}^{\ast} s \|^2_{h, \delta} + \|\bar{\partial} s \|^2_{h, \delta} = \langle \! \langle [\mathrm{i}\Theta_{L, h},
     \Lambda_{\omega_{\delta}}] s, s \rangle \! \rangle_{h, \delta} + \| D's \|^2_{h,
     \delta} + \|D'^{\ast} s \|^2_{h, \delta} \]
  where $D'$ is the $(1,0)$ part of the Chern connection. Since  $\lambda_1 \leqslant \lambda_2 \leqslant \cdots \leqslant \lambda_n$ are
  the eigenvalues of $\mathrm{i}\Theta_{L, h}$ with respect to $\omega$, it follows that
  \[
  \| \bar{\partial}^{\ast} s \|^2_{h, \delta} \geqslant \int_{X\setminus Z} (\lambda_1 +
    \lambda_2 + \cdots + \lambda_p) \left| s \right|^2_{h, \delta} d
    V_{\omega_\delta},
  \]
  therefore,
  \[
   \|\bar{\partial}^{\ast} s \|_{h, \delta}^2 + 2 p \varepsilon \| s \|^2_{h,
    \delta}\geqslant\int_{X\setminus Z} (\lambda_1 +
    \lambda_2 + \cdots + \lambda_p + 2 p \varepsilon) \left| s \right|^2_{h, \delta} d
    V_{\omega_\delta}.
  \]
  Since the space of smooth, compact-supported $L$-valued $(p, n)$ forms  is dense in $L^2_{(p,n)}(X\setminus Z,L,h,\omega_{\delta})$, the above inequality holds true for $s\in L^2_{(p,n)}(X\setminus Z,L,h,\omega_{\delta})$. Therefore, if $s \in \operatorname{Dom}\bar{\partial}^{\ast}\subset L^2_{(p,n)}(X\setminus Z,L,h,\omega_{\delta})$, one has  $s\in \operatorname{Ker}\bar{\partial}$. Since $f \in \operatorname{Ker} \bar{\partial}$, one can obtain
  \begin{eqnarray*}
    &  & \left| \langle \! \langle f, s \rangle \! \rangle_{h, \delta} \right|^2\\
    & \leqslant & \int_{X\setminus Z} \frac{1}{\lambda_1 + \lambda_2 + \cdots + \lambda_p
    + 2 p \varepsilon} \left| f \right|^2_{h, \delta} d V_{\omega_\delta} \cdot \int_{X\setminus Z} (\lambda_1 +
    \lambda_2 + \cdots + \lambda_p + 2 p \varepsilon) \left| s \right|^2_{h, \delta} d
    V_{\omega_\delta}\\
    & \leqslant & \left( \int_{X\setminus Z} \frac{1}{\lambda_1 + \lambda_2 + \cdots + \lambda_p
    + 2 p \varepsilon} \left| f \right|^2_{h, \delta} d V_{\omega_\delta} \right) \cdot \left(\|
    \bar{\partial}^{\ast} s \|_{h, \delta}^2 + 2 p \varepsilon \| s \|^2_{h,
    \delta}\right) .
  \end{eqnarray*}
  Thanks to the Hahn-Banach theorem and the Riesz representation theorem, there
  exists $u_{\delta}$ and $v_{\delta}$ such that
  \[ \langle \! \langle f, s \rangle \! \rangle_{h, \delta} = \langle \! \langle u_{\delta},
     \bar{\partial}^{\ast} s \rangle \! \rangle_{h, \delta} + \langle \! \langle v_{\delta}, s
     \rangle \! \rangle_{h, \delta},\]
       and
       \begin{equation} \label{eq:prop-est-del}
       	\| u_{\delta} \|^2_{h, \delta} + \frac{1}{2 p \varepsilon} \| v_{\delta}
     \|^2_{h, \delta} \leqslant \int_{X\setminus Z} \frac{1}{\lambda_1 + \lambda_2 + \cdots
     + \lambda_p + 2 p \varepsilon} | f |^2_{h, \delta} d V_{\omega_\delta} . 
     \end{equation}
  Therefore
  \[ f = \bar{\partial} u_{\delta} + v_{\delta}.\]
  It follows from Lemma \ref{lem:compare} and the inequality \eqref{eq:prop-est-del} that
  \[ \| u_{\delta} \|^2_{h, \delta} + \frac{1}{2 p \varepsilon} \| v_{\delta}
     \|^2_{h, \delta} \leqslant \int_{X\setminus Z} \frac{1}{\lambda_1 + \lambda_2 + \cdots
     + \lambda_p + 2 p \varepsilon} | f |^2_{h, \omega} d V_{\omega}, \]
  which implies that $\{ u_{\delta} \}_{\delta}$ and $\{ v_{\delta}
  \}_{\delta}$ are bounded in $L^2$ norms on every compact subset of ${X\setminus Z}$. Thus
  there are subsequences of $\{ u_{\delta} \}_{\delta}$ and $\{ v_{\delta}
  \}_{\delta}$ which weakly converge to $u$ and $v$ such that
  \[ f = \bar{\partial} u + v \]
  and
  \[ \| u \|^2_{h, \omega} + \frac{1}{2 p \varepsilon} \| v \|^2_{h, \omega}
     \leqslant \int_{X\setminus Z} \frac{1}{\lambda_1 + \lambda_2 + \cdots + \lambda_p + 2
     p \varepsilon} | f |^2_{h, \omega} d V_{\omega} . \]
By the  extension theorem in  \cite[Lemma 11.10]{Dem12}, we complete the proof.
\end{proof}

Guan-Zhou's solution to Demailly's strong openness conjecture \cite{guan-zhou2015soc} plays a critical role in the proof of Theorem \ref{thm:main}.
\begin{theorem}[{\cite[Theorem 1.1]{guan-zhou2015soc}}]\label{thm:soc}
Let $\varphi$ be a negative plurisubharmonic function on the unit polydisc $\Delta^n\subset \mathbb{C}^n$. Suppose $f$ is a holomorphic function on $\Delta^n$, which satisfies
\[
\int_{\Delta^n}|f|^2e^{-\varphi}d V_n<+\infty,
\]
where $dV_n$ is the Lebesgue measure on $\mathbb{C}^n$. Then for $r\in (0,1)$, there exists $s>0$ such that
\[
\int_{\Delta^n_r}|f|^2e^{-(1+s)\varphi}d V_n<+\infty,
\]
where $\Delta^n_r=\{z\in\mathbb{C}^n:|z_k|<r,k=1,\cdots,n\}$.
\end{theorem}

 It follows from strong openness property (Theorem \ref{thm:soc}) and \cite[Lemma 5.9]{cao2014} that a singular metric for a holomorphic line bundle admits a good regularization.

%

\begin{lemma} [{\cite[Lemma 5.9]{cao2014}}, {\cite[Theorem 1.1]{guan-zhou2015soc}}]
\label{prop:approximation}Let $(L, h)$ be a pesudo-effective line
bundle over a compact K{\"a}hler manifold $(X, \omega)$ of dimension $n$ and
$p \geqslant n - \operatorname{nd} (L, h) + 1$ be an integer. Fix an arbitrary smooth Hermitian metric $h_0$ of $L$, and denote by $\mathrm{i} \Theta_{L,h_0}$ the curvature of $h_0$. Let $h=h_0e^{-2\varphi}$, with $\varphi$ a quasi-plurisubharmonic function on $X$, such that $\mathrm{i} \Theta_{L,h_0}+\mathrm{i}\partial\bar{\partial}\varphi\geqslant 0$ in the sense of currents. Then there exists
a sequence of functions $\{ \hat{\varphi}_k \}_{k = 1}^{\infty}$  on $X$ satisfying the following properties.
\begin{enumerate}[(a)]
\item $\mathcal{I} (\hat{\varphi}_k) = \mathcal{I} (\varphi)$ for all $k \in \mathbb{N}$.
\item 
$\hat{\varphi}_k \leqslant 0$ on $X$.
\item \label{item:prop-app-3}
Let $\lambda_{1, k} \leqslant \lambda_{2, k} \leqslant \cdots
\leqslant \lambda_{n.k}$ be the eigenvalues of $\mathrm{i}\Theta_{L,
h_0e^{-2	\hat{\varphi}_k}}$ with respect to the base metric $\omega$. Then there
exist two sequences $\tau_k \rightarrow 0$ and $\varepsilon_k \rightarrow
0$ such that
\[ \varepsilon_k \gg \tau_k + \frac{1}{k}, \quad \lambda_{1, k} (x) \geqslant -
\varepsilon_k - \frac{C}{k} - \tau_k \]
for all $x \in X$ and $k \in \mathbb{N}$, where $C$ is a constant
independent of $k$.


\item \label{item:prop-app-4}
We can choose $\beta > 0$ and $0 < \alpha < 1$ independent of $k$
such that for every $k$, there exists an open subset $U_k$ of $X$
satisfying
\[ \operatorname{vol} (U_k) \leqslant \varepsilon^{\beta}_k \]
and
\[ \lambda_{p, k} + 2 \varepsilon_k \geqslant \varepsilon_k^{\alpha} \]
on $X\setminus U_k$.
\end{enumerate}
\end{lemma}

\begin{lemma}[{\cite[Lemma 5.10]{cao2014}}]
\label{prop:control}Let $\varphi$ and $\hat\varphi_k$ be as in Lemma \ref{prop:approximation}. Let $f$ be a local section of $\mathcal{O}(\Omega_X^p \otimes L) \otimes \mathcal{I} (\varphi)$
over an open subset $V \subset X$. Then for any relatively compact open subset $U\Subset V$, there exists $s>0$ such that for $k$ large enough,
\[ \int_U | f |^2 e^{- 2 \hat{\varphi}_k} dV_{\omega} \leqslant C_{\| f \|_{L^{\infty}}} \cdot
\left( \int_U | f |^2 e^{- 2 (1 + s) \varphi} dV_{\omega} \right)^{1 / (1 + s)}, \]
where $C_{\| f \|_{L^{\infty}}}$ is a constant depending only on $\| f \|_{L^{\infty}}$.
\end{lemma}
By Guan-Zhou's solution to Demailly's strong openness conjecture (Theorem \ref{thm:soc}), we may assume that $|f|^2e^{-2(1+s)\varphi}$ in Lemma \ref{prop:control} is integrable on $U$ with respect to $d V_{\omega}$.
\begin{theorem}
\label{prop:l2-estimate1}Let $(X, \omega)$ be a compact K{\"a}hler manifold of dimension $n$, and let $(L, h) \rightarrow X$ be a
holomorphic line bundle equipped with a singular metric $h$ satisfying $\mathrm{i}\Theta_{L, h} \geqslant 0$
in the sense of currents. Denote by $\operatorname{nd} (L, h)$ the numerical dimension of
$(L, h)$, and let $p \geqslant n -\operatorname{nd}(L, h) + 1$ be an integer. Fix a
smooth metric $h_0$ on $L$ and write $h = h_0 e^{- 2 \varphi}$. Let $\{
\hat{\varphi}_k \}_{k = 1}^{\infty}$ be the sequence of functions in
Lemma \ref{prop:approximation}. Then for any $f \in L^2_{p, n} (X, L,
h, \omega)$, there exists $u_k$ and $v_k$ such
that
\[ f = \bar{\partial} u_k + v_k \]
with
\begin{equation} \label{equ:l2-2}
	\begin{split} 
	& \int_X | u_k |^2_{h_0, \omega} e^{- 2 \hat{\varphi}_k} d V_{\omega} + \frac{1}{2 p \varepsilon_k} \int_X | v_k |^2_{h_0, \omega} e^{- 2 \hat{\varphi}_k} d V_{\omega} \\
	\leqslant & \int_X \frac{1}{\lambda_{1, k} + \lambda_{2, k} + \cdots + \lambda_{p, k} + 2 p \varepsilon_k} | f |^2_{h_0, \omega} e^{- 2 \hat{\varphi}_k} d V_{\omega}, 
	\end{split}
\end{equation}
and
\begin{equation}
\lim_{k \rightarrow \infty} \int_X | v_k |^2_{h_0, \omega} e^{- 2
	\hat{\varphi}_k} d V_{\omega} = 0. \label{equ:l2-3}
\end{equation}
\end{theorem}

\begin{proof}
By Theorem \ref{thm:soc} and the compactness of $X$, there exists $s > 0$ such that
\[ \int_X | f |^2_{h_0, \omega} e^{- 2 (1 + s) \varphi} d V_\omega< \infty . \]
It follows from Lemma \ref{prop:control} that for any open set $U$,
\begin{equation}\label{equ:control1}
 \int_U | f |^2_{h_0, \omega} e^{- 2 \hat{\varphi}_k} dV_{\omega} \leqslant C_{\| f
	\|_{L^{\infty}}} \cdot \left(\int_U | f |^2_{h_0, \omega} e^{- 2 (1 + s) \varphi} d
V_{\omega} \right)^{\frac{1}{1+s}}.
\end{equation}
From the second condition of Lemma \ref{prop:approximation}, we may
assume $\lambda_{1, k} + 2 \varepsilon_k > 0$ and by Proposition
\ref{prop:l2-estimate}, there exists $u_k$ and $v_k$ such that
\[ f = \bar{\partial} u_k + v_k \]
and
\begin{eqnarray*}
	&  & \int_X | u_k |^2_{h_0, \omega} e^{- 2 \hat{\varphi}_k} d V_{\omega} +
	\frac{1}{2 p \varepsilon_k} \int_X | v_k |^2_{h_0, \omega} e^{- 2
		\hat{\varphi}_k} d V_{\omega}\\
	& \leqslant & \int_X \frac{1}{\lambda_{1, k} + \lambda_{2, k} + \cdots +
		\lambda_{p, k} + 2 p \varepsilon_k} | f |^2_{h_0, \omega} e^{- 2
		\hat{\varphi}_k} d V_{\omega}.
\end{eqnarray*}
It remains to verify that
\[ \lim_{k \rightarrow \infty} \int_X | v_k |^2_{h_0, \omega} e^{- 2
	\hat{\varphi}_k} d V_{\omega} = 0. \]
From the estimate \eqref{equ:l2-2}, we have
\begin{eqnarray*}
	&  & \int_X | u_k |^2_{h_0, \omega} e^{- 2 \hat{\varphi}_k} d V_{\omega} +
	\frac{1}{2 p \varepsilon_k} \int_X | v_k |^2_{h_0, \omega} e^{- 2
		\hat{\varphi}_k} d V_{\omega}\\
	& \leqslant & \int_X \frac{1}{\lambda_{1, k} + \lambda_{2, k} + \cdots +
		\lambda_{p, k} + 2 p \varepsilon_k} | f |^2_{h_0, \omega} e^{- 2
		\hat{\varphi}_k} d V_{\omega}\\
	& \leqslant & \int_{X\setminus U_k} \frac{1}{\varepsilon_k^{\alpha}} | f
	|^2_{h_0, \omega} e^{- 2 \hat{\varphi}_k} d V_{\omega} + \int_{U_k}
	\frac{1}{C_1 \varepsilon_k} | f |^2_{h_0, \omega} e^{- 2 \hat{\varphi}_k}
	d V_{\omega},
\end{eqnarray*}
where the constant $C_1$ is independent of $k$ and the second inequality follows from the properties (c) and (d) of
Lemma \ref{prop:approximation}. Therefore one has
\begin{eqnarray*}
	 \int_X | v_k |^2_{h_0, \omega} e^{- 2 \hat{\varphi}_k} d
	V_{\omega}
	 \leqslant & 2 p \varepsilon_k^{1 - \alpha} \int_{X\setminus U_k} | f
	|^2_{h_0, \omega} e^{- 2 \hat{\varphi}_k} d V_{\omega} + \frac{2 p}{C_1}
	\int_{U_k} | f |^2_{h_0, \omega} e^{- 2 \hat{\varphi}_k} d V_{\omega}.
\end{eqnarray*}
Since $\operatorname{vol }(U_k) \rightarrow 0$ as $k \rightarrow \infty$, it follows from the inequality \eqref{equ:control1} that the second term of the above inequality tends to zero as
$k \rightarrow \infty$. Again by inequality \eqref{equ:control1}, the first term of the above inequality tends to zero as $k
\rightarrow \infty$ since $\varepsilon_k \rightarrow 0$ and $\alpha \in (0, 1)$.
\end{proof}

\section{{\v C}ech cohomology}
\label{sect: cech}
Let $X$ be a compact complex manifold of dimension $n$, and let $L \rightarrow X$ be a holomorphic line bundle equipped with a singular metric $h$ such that $\mathrm{i} \Theta_{L, h} \geqslant 0$ in the sense of currents.
Denote by $\mathcal{U} = \{ U_{\alpha} \}_{\alpha \in I}$ a Stein covering of
$X$ and set
\[ U_{\alpha_0 \alpha_1  \cdots \alpha_q} = U_{\alpha_0} \cap U_{\alpha_1}
   \cap \cdots \cap U_{\alpha_q} . \]
Denote by $\check{C}^q (\mathcal{U}, \mathcal{O}(\Omega^p_X \otimes L) \otimes \mathcal{I} ( h ))$ the
{\v C}ech $q$-cochain of $\mathcal{O}(\Omega^p_X \otimes L) \otimes \mathcal{I} ( h )$. Let $u \in
\check{C}^q (\mathcal{U}, \mathcal{O}(\Omega^p_X \otimes L)\otimes \mathcal{I} ( h ))$ and
$u_{\alpha_0 \alpha_1 \cdots  \alpha_q}$ its component on $U_{\alpha_0
\alpha_1  \cdots  \alpha_q}$. Let
\[ \delta^q : \check{C}^{q} (\mathcal{U}, \mathcal{O}(\Omega^p_X \otimes L) \otimes \mathcal{I}
   (h)) \rightarrow \check{C}^{q+1} (\mathcal{U}, \mathcal{O}(\Omega^p_X \otimes L) \otimes \mathcal{I}
   (h)) \]
be the {\v C}ech operator and $\check{Z}^q \coloneqq \operatorname{Ker} \delta^{q}$.
It is proved in \cite[Lemma 5.8]{cao2014} that $\check{C}^q (\mathcal{U}, \mathcal{O}(\Omega^p_X \otimes L) \otimes \mathcal{I} ( h ))$  is a Fr\'echet space equipped with semi-norms defined by  the $L^2$-integration on relatively compact subsets. The following result is obtained in \cite{cao2014} for $u\in \check{H}^p(X,\mathcal{O}(K_X\otimes L)\otimes \mathcal{I}(\varphi))$. With the same method, it holds true for $u\in \check{H}^q (X, \mathcal{O}(\Omega^p_X \otimes L) \otimes \mathcal{I}
  (h))$ as well. For the sake of completion, we include the proof here.
\begin{lemma}[{\cite[Lemma 5.8]{cao2014}}]
  \label{prop:functional}Let $(X, \omega)$ be a compact complex manifold, and
  let $L \rightarrow X$ be a holomorphic line bundle equipped with a singular metric $h$ such that $\mathrm{i} \Theta_{L, h} \geqslant 0$ in the sense of currents. Let $\{ U_{\alpha} \}_{\alpha \in I}$ be a Stein covering of $X$.
  Let $u$ be an element in $\check{H}^q (X, \mathcal{O}(\Omega^p_X \otimes L) \otimes \mathcal{I}
  (h))$. If there exists a sequence $\{ v_k \} \subset \check{C}^q
  (\mathcal{U}, \mathcal{O}(\Omega^p_X \otimes L) \otimes \mathcal{I} ( h ))$ in cohomology class $u$
  such that
  \[ \lim_{k \rightarrow \infty} \int_{U_{\alpha_0 \alpha_1 \cdots
     \alpha_q}} | v_{k, \alpha_0 \alpha_1 \cdots \alpha_p} |^2_{h_{0,} \omega} d
     V_{\omega} =0, \]
  where $h_0$ in the above equality is a fixed smooth metric on $L$, then $u=0$ in $\check{H}^p (X, \mathcal{O}(\Omega^p_X \otimes L) \otimes \mathcal{I} ( h ))$.
\end{lemma}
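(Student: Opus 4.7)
The plan is to use the Fr\'echet topology on the \v Cech complex, together with the closedness of the space of coboundaries, to translate $L^2$-smallness of representatives of $u$ into triviality in cohomology.

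First I would verify that the \v Cech coboundary
\[
\delta_q : \check C^{q-1}(\mathcal U, \Omega_X^p \otimes L \otimes I(h)) \longrightarrow \check C^q(\mathcal U, \Omega_X^p \otimes L \otimes I(h))
\]
is continuous for the Fr\'echet topology defined by the $L^2$-seminorms on relatively compact subsets of the $U_{\alpha_0 \cdots \alpha_q}$. This is immediate since $\delta_q$ is an alternating sum of restriction maps, and restriction is continuous in these seminorms. Consequently $\check Z^q := \ker \delta_{q+1}$ is a closed subspace, and so inherits the Fr\'echet structure.

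The heart of the argument, and the main obstacle, is to show that the coboundary subspace $B^q := \delta_q(\check C^{q-1})$ is closed in $\check Z^q$. I would deduce this from the finite-dimensionality of $\check H^q(X, \Omega_X^p \otimes L \otimes I(h))$, which follows from the Cartan--Serre theorem applied to the coherent analytic sheaf $\Omega_X^p \otimes L \otimes I(h)$ on the compact manifold $X$: the quotient $\check Z^q / B^q$ is finite-dimensional, and the continuous surjection $\check Z^q \to \check Z^q / B^q$ is open by the open mapping theorem, which forces $B^q$ to be closed. Equivalently, one can pass through the \v Cech--Dolbeault isomorphism and invoke H\"ormander's closed-range theorem for $\bar\partial$ on the appropriate weighted $L^2$ spaces; this is the viewpoint taken in \cite[Lemma 5.8]{cao2014} and \cite[Theorem 5.3]{Mat18}.

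With closedness of $B^q$ in hand, the conclusion is immediate. Fix an arbitrary representative $v_0 \in \check Z^q$ of the class $u$. Then $v_k - v_0 \in B^q$ for every $k$. The hypothesis that $\int_{U_{\alpha_0 \cdots \alpha_q}} |v_{k,\alpha_0 \cdots \alpha_q}|^2_{h_0,\omega}\, dV_\omega \to 0$ for every multi-index means precisely that $v_k \to 0$ in the Fr\'echet topology, and hence $v_k - v_0 \to -v_0$. Since $B^q$ is closed, $-v_0 \in B^q$, and therefore $u = [v_0] = 0$ in $\check H^q(X, \Omega_X^p \otimes L \otimes I(h))$.
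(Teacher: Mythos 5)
Your proposal is correct and takes essentially the same route as the paper: the Fr\'echet topology on the \v Cech spaces, continuity of $\delta$, finite-dimensionality of $\check H^q(X,\Omega^p_X\otimes L\otimes I(h))$ forcing the coboundary space to be closed, and then the $L^2$-convergence $v_k\to 0$ killing the class (the paper phrases the last step via continuity of the projection onto the Hausdorff quotient rather than via $v_k-v_0\to -v_0\in B^q$, but this is the same argument). One small caveat: your justification of the closedness of $B^q$ by applying the open mapping theorem to $\check Z^q\to \check Z^q/B^q$ is circular as written, since the open mapping theorem needs the target to already be a Fr\'echet space, i.e.\ $B^q$ closed; the fact actually needed (and tacitly used in the paper) is that a continuous linear map between Fr\'echet spaces whose image has finite codimension has closed image, proved e.g.\ by comparing with the continuous bijection from $\check C^{q-1}/\ker\delta \oplus F$ onto $\check Z^q$ for a finite-dimensional complement $F$.
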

 \begin{proof} The proof is due to \cite{cao2014}.
 	Since $\check{C}^q (\mathcal{U}, \mathcal{O}(\Omega^p_X \otimes L) \otimes \mathcal{I} ( h ))$ is a
  Fr{\'e}chet space, the {\v C}ech operator is continuous and its Kernel
  $\check{Z}^p (\mathcal{U}, \mathcal{O}(\Omega^p_X  \otimes L) \otimes \mathcal{I} ( h ))$ is also a
  Fr{\'e}chet space. Therefore the boundary morphism
  \[ \delta^{q-1} : \check{C}^{q - 1} (\mathcal{U}, \mathcal{O}(\Omega^p_X \otimes L) \otimes
     \mathcal{I} ( h )) \rightarrow \check{Z}^q (\mathcal{U}, \mathcal{O}(\Omega^p_X \otimes L )
     \otimes\mathcal{I} ( h )) \]
  is continuous.

  Since $X$ is compact, the cokernel of $\delta^{q-1}$, i.e. $\check{H}^q (X,
  \mathcal{O}(\Omega^p_X \otimes L) \otimes \mathcal{I} ( h ))$, is of finite dimension and thus the
  image of $\delta^{q-1}$ is closed. Thus the quotient morphism
  \[ \operatorname{pr} : \check{Z}^q (\mathcal{U}, \mathcal{O}(\Omega^p_X \otimes L )\otimes \mathcal{I} ( h ))
     \rightarrow \check{H}^q (X, \mathcal{O}(\Omega^p_X \otimes L) \otimes \mathcal{I} ( h )) \]
  is continuous. Therefore the equation
  \[ \lim_{k \rightarrow \infty} \int_{U_{\alpha_0 \alpha_1 \cdots
     \alpha_q}} | v_{k, \alpha_0 \alpha_1 \cdots \alpha_p} |^2_{h_0,\omega} d V_{\omega}
     = 0 \]
  implies $\{ v_k \}_{k = 1}^{\infty}$ tends to $0$ in $\check{Z}^q
  (\mathcal{U}, \mathcal{O}(\Omega^p_X \otimes L) \otimes \mathcal{I} ( h ))$ and hence
  \[ \lim_{k \rightarrow \infty} \operatorname{pr} (v_k) = 0 \in \check{H}^q (X, \mathcal{O}(\Omega^p_X
     \otimes L) \otimes \mathcal{I} ( h )) . \]
  Since $\operatorname{ pr }(v_k)=u \in \check{H}^q (X, \mathcal{O}(\Omega^p_X
     \otimes L) \otimes \mathcal{I} ( h ))$,
      it concludes that
  $u=0$ in $\check{H}^q (X, \mathcal{O}(\Omega^p_X
     \otimes L) \otimes \mathcal{I} ( h ))$.
\end{proof}
%
%

\section{The proof of Theorem \ref{thm:main}}\label{sect:proof}
In this section, we give the proof of Theorem \ref{thm:main}, which we restate as follows.

\begin{theorem}[=Theorem \ref{thm:main}]
  \label{thm:main2}Let $(X,\omega)$ be a compact K\"ahler manifold of dimension $n$. Let $L \rightarrow X$ be a holomorphic line
  bundle equipped with a singular metric $h$ with
  \[ \mathrm{i}\Theta_{L, h} \geqslant 0, \]
  in the sense of currents, and let $\operatorname{nd} (L, h)$ denote the numerical
  dimension of $(L, h)$. Then
  \[ H^n (X, \mathcal O(\Omega^p_X \otimes L) \otimes \mathcal{I} ( h )) = 0, \]
  for $p \geqslant n - \operatorname{nd}(L, h) + 1$.
\end{theorem}

\begin{proof}
Let $\mathcal{L}_{(L, h)}^{p, q}$ be the sheaf of   germs of $L$-valued $(p, q)$ forms
$u$ with measurable coefficients such that both $| u |^2_{h, \omega}$ and $|
\bar{\partial} u |^2_{h, \omega}$ are locally integrable. Consider the following exact sequence
\begin{equation*}
  0 \rightarrow\operatorname{Ker} \bar{\partial}_0 \rightarrow \mathcal{L}_{(L, h)}^{p, 0}
  \xrightarrow{\bar{\partial}_0} \mathcal{L}_{(L, h)}^{p, 1}
  \xrightarrow{\bar{\partial}_1} \cdots \xrightarrow{\bar{\partial} _{n-1}}
  \mathcal{L}_{(L, h)}^{p, n} \longrightarrow 0. 
\end{equation*}
It is well-known that
\[
\operatorname{Ker} \bar{\partial}_0 = \mathcal{O} (\Omega^p_X \otimes L) \otimes \mathcal{I} ( h ),
\]
and
\[ H^q (X, \mathcal{O} (\Omega^p_X \otimes L) \otimes \mathcal{I} ( h )) \cong H^q \left(\Gamma
   \left(X, \mathcal{L}_{(L, h)}^{p, \bullet}\right)\right). \]
Fix a smooth metric $h_{0}$ on $L$, then we can write $h = h_{0} e^{- 2 \varphi}$ for some quasi-plurisubharmonic function $\varphi$ on $X$.
By Lemma \ref{prop:approximation}, one has $\mathcal{I} ( h ) = \mathcal{I} (h_0 e^{- 2
\varphi}) = \mathcal{I} (h_0 e^{- 2 \hat{\varphi}_k})$ for all $k$. Here we use the notations in Lemma \ref{prop:approximation}. Then
\[ H^q (X, \mathcal{O}(\Omega^p_X \otimes L )\otimes \mathcal{I} ( h )) \cong H^q \left( \Gamma \left(
   X, \mathcal{L}_{(L, h_0 e^{- 2 \hat{\varphi}_k})}^{p, \bullet} \right)
   \right), \]
for all $k \in \mathbb{N}$.

Now assume that $p \geqslant n - \operatorname{nd} (L, h) + 1$ and $q = n$. Let
$\mathcal{U} = \{ U_{\alpha} \}_{\alpha \in I}$ be a Stein covering of $X$. By
Proposition \ref{prop:l2-estimate1}, for a $\bar{\partial}$-closed
$L$-valued $(p, n)$ form $f$, there exists $u_k$ and $v_k$ which satisfies the equation $f =
\bar{\partial} u_k + v_k$ and the estimates \eqref{equ:l2-2} and \eqref{equ:l2-3}.

By solving $\bar{\partial}$-equations, we can get a $p$-cocycle
\[ \{ v_{k, \alpha_0 \alpha_1 \cdots \alpha_q} \} \in \check{Z}^q
   (\mathcal{U}, \mathcal{O}(\Omega^p_X \otimes L) \otimes \mathcal{I} ( h )) \]
with the following estimate
\begin{equation} \label{eq:pf:thm:mn:est:cech}
	\int_{U_{\alpha_0 \alpha_1 \cdots \alpha_q}} | v_{k, \alpha_0 \alpha_1
   \cdots \alpha_q} |^2_{h_0, \omega} e^{- 2 \hat{\varphi}_k} d V_{\omega}
   \leqslant C \int_X | v_k |^2_{h_0, \omega} e^{- 2 \hat{\varphi}_k} d
   V_{\omega}, 
\end{equation}
where the constant $C$ is independent of $k$. Moreover, under the de Rham-Weil isomorphism
\[ H^q \left(\Gamma\left(X, \mathcal{L}_{(L, h)}^{p, \bullet}\right)\right) \cong \check{H}^q (X, \mathcal{O}(\Omega^p_X \otimes L) \otimes \mathcal{I} ( h )), \]
the cohomology class of $\{ v_{k, \alpha_0 \alpha_1 \cdots \alpha_q} \}$ in $\check{H}^q (X, \mathcal{O}(\Omega^p_X \otimes L) \otimes \mathcal{I} ( h ))$ corresponds to the cohomology of $v_{k}$ in $H^q \left(\Gamma\left(X, \mathcal{L}_{(L, h)}^{p, \bullet}\right)\right)$.

By properties (b) of Lemma \ref{prop:approximation}, $\hat{\varphi}_k$ is uniformly upper bounded on $X$. Then it follows from the estimate \eqref{eq:pf:thm:mn:est:cech} that
\[ \int_{U_{\alpha_0 \alpha_1 \cdots \alpha_q}} | v_{k, \alpha_0 \alpha_1
   \cdots \alpha_q} |^2_{h_0, \omega} d V_{\omega} \rightarrow 0. \]
Then we can conclude that the cohomology class of $\{ v_{k, \alpha_0 \alpha_1 \cdots \alpha_q} \}$ in $\check{H}^q (X, \mathcal{O}(\Omega^p_X \otimes L) \otimes \mathcal{I} ( h ))$ is zero by Lemma \ref{prop:functional}. By the de Rham-Weil isomorphism, the cohomology of $v_{k}$ in $H^q \left(\Gamma\left(X, \mathcal{L}_{(L, h)}^{p, \bullet}\right)\right)$ is also zero. From the equation
\[ f = \bar{\partial} u_k + v_k, \]
we know that $f$ and $v_{k}$ are in the same cohomology class and hence the cohomology class of $f$ in $H^n (X, \mathcal{O}(\Omega^p_X \otimes L )\otimes \mathcal{I} ( h ))$ is zero. We thus proved that
\[0= H^n \left(\Gamma \left(X, \mathcal{L}_{(L, h)}^{p, \bullet}\right)\right) \cong H^n (X, \mathcal{O}(\Omega^p_X \otimes L) \otimes \mathcal{I} ( h )),  \]
for $p \geqslant n - \operatorname{nd} (L, h) + 1$.
\end{proof}

 Combining the Lemma \ref{lem:ki}, one may find Theorem \ref{thm:main} implies the Bogomolov vanishing theorem.

\begin{theorem}[Bogomolov]\label{thm:bogomolov}
  Let $X$ be a compact K\"ahler manifold of dimension $n$ and $L \rightarrow X$ a
  holomorphic line bundle over $X$. Denote by $\kappa (L)$ the Kodaira
  dimension of $L$. Then
  \[ H^n \left(X, \mathcal{O}\left(\Omega^p_X \otimes L\right)\right) = 0, \]
  for $p \geqslant n - \kappa (L) + 1$.
\end{theorem}

\begin{proof}
  Let $h$ be the singular metric on $L$ constructed in Lemma \ref{lem:ki}. In this case, we have $\kappa ( L ) \leqslant \operatorname{nd} ( L , h )$. Consider the following exact sequence
  \[
  \begin{split}
     \cdots \rightarrow H^n (X, \mathcal{O} (\Omega^p_X \otimes L) &\otimes
    \mathcal{I} \left(h\right))  \rightarrow H^n (X, \mathcal{O}(\Omega_X^p \otimes L))  \rightarrow H^n \left(V (\mathcal{I} (h)), \mathcal{O}(\Omega^p_n \otimes L) |_{V (\mathcal{I}
    (h))}\right) \rightarrow \cdots,
  \end{split}
\]
  where $V ( \mathcal{I} (h))$ is the complex analytic subspace defined by $\mathcal{I} (h)$. By  Theorem  \ref{thm:main}, we have
  \[ H^n (X, \mathcal{O} (\Omega^p_X \otimes L) \otimes \mathcal{I} (h)) = 0. \]
  for $p \geqslant n - \operatorname{nd} ( L , h ) + 1$. However,
  \[ H^n \left(V (\mathcal{I} ( h )), \mathcal{O}\left.\left(\Omega^p_X \otimes L\right) \right|_{V (\mathcal{I} ( h ))}\right) = 0 \]
  since $\dim V (\mathcal{I} ( h ))<n$. It follows that
  \[ H^n \left(X, \mathcal{O}\left(\Omega_X^p \otimes L\right)\right) = 0 \quad \text{for}  \quad p \geqslant n -  \operatorname{nd} ( L , h ) + 1. \]
  By construction, $\kappa ( L ) \leqslant \operatorname{nd} ( L , h )$, so we can conclude that
  \[ H^n \left(X, \mathcal{O}\left(\Omega_X^p \otimes L\right)\right) = 0 \quad \text{for}  \quad p \geqslant n - \kappa (L) + 1. \]
\end{proof}

\end{document}